\newtheorem{theorem}{Theorem}[section]
\newtheorem{proposition}[theorem]{Proposition}
\newtheorem{lemma}[theorem]{Lemma}
\theoremstyle{remark}
\newtheorem{remark}[theorem]{Remark}
\numberwithin{equation}{section}
\begin{document}

\title[
On linear $n$-colorings for knots
]{
On linear $n$-colorings for knots
}

\author{Chuichiro Hayashi, Miwa Hayashi and Kanako Oshiro}

%\date{\today}

\thanks{The first author is partially supported
by Grant-in-Aid for Scientific Research (No. 22540101),
Ministry of Education, Science, Sports and Technology, Japan.}

\thanks{The third author is partially supported
by Grant-in-Aid for Research Activity Start-up (No. 90609091),
Japan Society for the Promotion of Science}

\begin{abstract}
 If a knot has the Alexander polynomial not equvalent to $1$, 
then it is linear $n$-colorable.
 By means of such a coloring, such a knot is given an upper bound for the minimal quandle order,
i.e., the minimal order of a quandle with which the knot is quandle colorable.
  For twist knots, we study the minimal quandle orders in detail.
\end{abstract}

\maketitle

\section{Introduction}\label{sect:introduction} 

 In 1982, Joyce \cite{Joyce} and Matveev \cite{Matveev} introduced the notion of quandles.  
A {\it quandle} is a non-empty set equipped with a binary operation satisfying three axioms that correspond to the three Reidemeister moves on knot diagrams. 
%An important class of quandles is Alexander quandles. 
Alexander quandles form an important class of quandles.
See Section~\ref{sec:Fundamental properties} for details of quandles.

The history of colorings of knot diagrams goes back to Fox colorings \cite{F}, that correspond to homomorphisms of knot groups to the dihedral groups. 
It is well-known that if the determinant of a given knot is divisible by an integer $n$ with $n\geq 3$, then any diagram of the knot is
Fox $n$-colorable \cite{F}. 
Such colorings are generalized to quandle colorings (see Section \ref{sec:Fundamental properties}), 
and some conditions on the Alexander polynomial of a given knot so that its diagrams have non-trivial colorings with a given Alexander quandle are shown in \cite{Bae, Inoue}. 

In this paper, we study a generalization of Fox-colorings which corresponds to a class of Alexander quandle colorings. 

 Let $L$ be an oriented classical link in the $3$-sphere $S^3$, 
$D$ a link diagram of $L$ on a $2$-sphere $S^2$ in $S^3$,
and $c$ the number of crossings of $D$.
 Cutting $D$ at the $c$ undercrossing points,
we obtain $c$ arcs, which we call {\it strands} of $D$.
 Let ${\mathcal S}(D)$ be the set of strands of $D$.

\begin{figure}[htbp]
\begin{center}
\includegraphics[width=20mm]{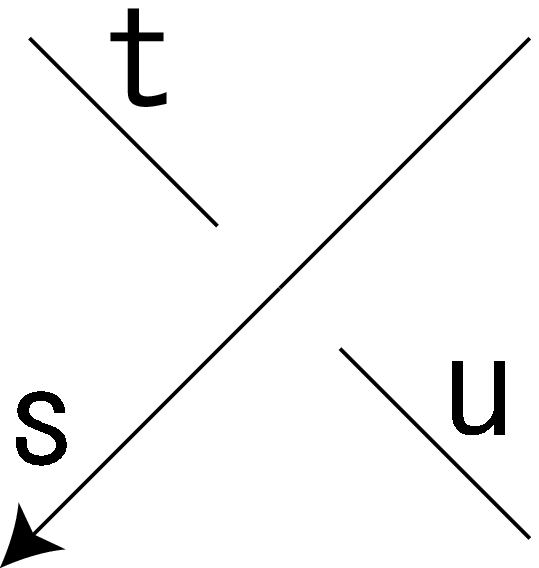}
\end{center}
\caption{}
\label{fig:CrossingCondition}
\end{figure}

 Let $n$ be an integer larger than $1$,
and $k$ and $\ell$ be positive integers
with ${\rm GCD}(n,k)=1$ and ${\rm GCD}(n,\ell)=1$, 
i.e., $n$ and $k$ are coprime and so are $n$ and $\ell$. 
 Let ${\mathbb Z}_n$ be the cyclic group of order $n$.
 A map $\varphi : {\mathcal S}(D) \rightarrow {\mathbb Z}_n$ is called 
a {\it linear $n$-coloring}, or more precisely,
a {\it $({\mathbb Z}_n, {}_{\ell}*_k)$-coloring}
if it satisfies the following {\it crossing condition}
at each crossing of $D$.
 Let $x$ be a crossing point of $D$, 
$s$ the strand which goes over $x$,
$t$ the strand which lies on the right hand of $s$,
and $u$ the strand on the left hand of $s$.
 See Figure \ref{fig:CrossingCondition}.
 Then the crossing condition at $x$ is given by the equation 
$\ell\varphi(t)+k\varphi(u)=(\ell+k)\varphi(s)$ in ${\mathbb Z}_n$.
 This notion coincides with the usual Fox $n$-coloring \cite{F}
when $k=1$ and $\ell=1$.
 For general $k$ and $\ell$,
this is no more than a coloring with the Alexander quandle $(\mathbb Z _n, {}_{\ell}*_k)$, 
see Section~\ref{sec:Fundamental properties}.

 A linear $n$-coloring is called {\it trivial}
if it is a constant map.
 If the diagram $D$ admits a non-trivial linear $n$-coloring,
the link $L$ is called {\it linear $n$-colorable} or {\it $(\mathbb Z _n, {}_{\ell}*_k)$-colorable}.
 The number of all $({\mathbb Z}_n, {}_{\ell}*_k)$-colorings
is called the {\it $({\mathbb Z}_n, {}_{\ell}*_k)$-coloring number}.
The $({\mathbb Z}_n, {}_{\ell}*_k)$-colorability and $({\mathbb Z}_n, {}_{\ell}*_k)$-coloring number 
are invariants of an oriented link.

 The $(3,5)$-torus knot ($10_{124}$ in Rolfsen's table \cite{Ro})
has the determinant $1$,
and does not have a non-trivial Fox $n$-coloring, refer to \cite{knotatlas}.
 However, it has a non-trivial $({\mathbb Z}_{31}, {}_3*_1)$-coloring
as in Figure \ref{fig:10_124},
which is also a $({\mathbb Z}_{31}, {}_1*_{21})$-coloring.

\begin{figure}[htbp]
\begin{center}
\includegraphics[width=45mm]{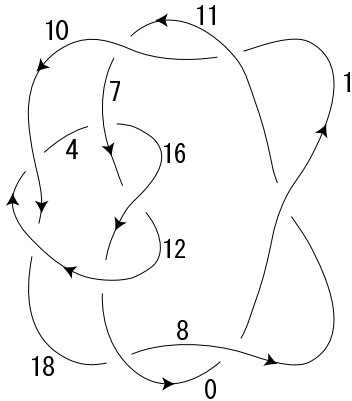}
\end{center}
\caption{the $(3,5)$-torus knot $10_{124}$}
\label{fig:10_124}
\end{figure}

 In this paper, $\Delta_L (t)$ denotes the Alexander polynomial of a knot $L$, 
where  $t$ is the variable.
 We can assume that the constant term of $\Delta_L (t)$ is of the lowest degree
by multiplying it by $t^i$ for some integer $i$. 
 Let $a_i$ be the coefficient of $t^i$ in $\Delta_L (t)$.
 That is, $\Delta_L (t) = a_d t^d + a_{d-1} t^{d-1} + \cdots + a_1 t + a_0$ with $a_0 \ne 0$.

% In case of $k=1$ and $\ell=1$, (1) in the next proposition is well-known, refer to \cite{L}.
% Note that $n$ may be non-prime in (1).
% When $n$ is prime,
%(1) is a part of Corollary 2 in \cite{Inoue} by A. Inoue.

 In the case where $n$ is a prime number, $k=1$ and $\ell=1$, 
(1) in the next proposition is well-known, refer to \cite{L}.
 Note that $n$ may be non-prime in (1).
 When $n$ is prime,
(1) is a part of Corollary 2 in \cite{Inoue} by A. Inoue.

\begin{proposition}\label{lem:FundamentalProperty}
\begin{enumerate}
\item[(1)] 
 Let $n$ be an integer larger than $1$,
and $k$, $\ell$ be positive integers coprime with $n$.
 A knot $L$ is $({\mathbb Z}_n, {}_{\ell} *_k)$-colorable 
if and only if there exists an odd prime natural number $p$ 
such that $p|n$ and  $\Delta_L(-\overline{\ell}k) \equiv 0$ (mod $p$), 
where $\overline{\ell}$ is the element of ${\mathbb Z}_n$ 
with $\overline{\ell}\ell=1$. 
\item[(2)]
 Let $n$ be an odd prime natural number.
 Suppose that a knot $L$ is $({\mathbb Z}_n, {}_1*_k)$-colorable.
 Then it is also $({\mathbb Z}_n, {}_1*_{\overline{k}})$-colorable,
where $\overline{k}$ is the element of ${\mathbb Z}_n$ with $\overline{k}k=1$.
 Moreover, $L$ is also $({\mathbb Z}_n, {}_k*_1)$-colorable. 
\item[(3)]
If a knot $L$ has the Alexander polynomial $\Delta_L (t) \ne 1$,
then it is $({\mathbb Z}_n, {}_1 *_k)$-colorable
for some odd integer $n$ larger than $2$ 
and some positive integer $k$ with ${\rm GCD}(n,k) \ne 1$.
In fact, this is the case for $k$ and $n=|\Delta_L (-k)|$,
where $k$ is a positive integer with (i) ${\rm GCD}(k, a_0)=1$
and (ii) $k \ge (\displaystyle\max_{1\le i \le d-1} |a_i /a_d|) + 1$.
\item[(4)]
For an arbitrarily given knot $L$ and an arbitrarily given integer $n$ larger than $2$,
there is a finite algorithm 
to determine if a given knot $L$ is linear $n$-colorable or not.
\item[(5)]
For an arbitrarily given knot $L$ with the Alexander polynomial $\Delta_L (t) \ne 1$, 
there is a finite algorithm
to determine the minimal number $n$
for which $L$ is linear $n$-colorable.\end{enumerate}
\end{proposition}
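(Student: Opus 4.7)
The plan is to prove (1) as the core result and derive (2)--(5) from it.

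For (1), I would rearrange the crossing condition $\ell\varphi(t)+k\varphi(u)=(\ell+k)\varphi(s)$, using $\overline{\ell}\ell\equiv 1\pmod n$, as
\[
\varphi(t)=(1+\overline{\ell}k)\varphi(s)-\overline{\ell}k\,\varphi(u),
\]
exhibiting a $({\mathbb Z}_n,{}_{\ell}*_k)$-coloring as an Alexander-quandle coloring with parameter $T:=-\overline{\ell}k$. I then assemble the $c\times c$ coloring matrix $M$ over ${\mathbb Z}_n$ (rows indexed by crossings, columns by strands). Its row sums vanish, so constant maps always lie in $\ker M$, and a non-trivial coloring exists iff $\operatorname{rank}_{{\mathbb Z}_n}M\le c-2$. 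Deleting one redundant row and one column yields a square $(c-1)\times(c-1)$ block whose determinant equals, by the classical Wirtinger/Alexander-matrix identification and up to a unit of ${\mathbb Z}_n$ (the units appearing from powers of $-\overline{\ell}k$, which is invertible because $\gcd(n,k\ell)=1$), $\Delta_L(-\overline{\ell}k)$. Hence a non-trivial coloring modulo $n$ exists iff some prime $p\mid n$ divides $\Delta_L(-\overline{\ell}k)$. Such $p$ is automatically odd: if $2\mid n$ then $k,\ell$ are forced odd, so $-\overline{\ell}k\equiv 1\pmod 2$ and $\Delta_L(-\overline{\ell}k)\equiv\Delta_L(1)=\pm 1\pmod 2$, ruling out $p=2$.

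For (2), apply (1): $({\mathbb Z}_n,{}_1*_k)$-colorability is the congruence $\Delta_L(-k)\equiv 0\pmod n$. The palindromic symmetry $t^d\Delta_L(t^{-1})=\pm\Delta_L(t)$ evaluated at $t=-\overline{k}$ yields $\Delta_L(-\overline{k})\equiv 0\pmod n$, which is simultaneously the criterion from (1) for $({\mathbb Z}_n,{}_1*_{\overline{k}})$-colorability and, since it equals $\Delta_L(-\overline{k}\cdot 1)$, for $({\mathbb Z}_n,{}_k*_1)$-colorability. For (3), I take $k$ odd and satisfying (i) and (ii) (always possible---take $k$ a sufficiently large odd prime coprime to $a_0$) and set $n=|\Delta_L(-k)|$. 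Reduction mod $k$ gives $\Delta_L(-k)\equiv a_0\pmod k$, so (i) yields $\gcd(n,k)=1$; reduction mod $2$ using $k$ odd gives $\Delta_L(-k)\equiv\Delta_L(1)=\pm 1\pmod 2$, so $n$ is odd; and the triangle-inequality estimate $|\Delta_L(-k)|\ge|a_d|k^d-\sum_{1\le i\le d-1}|a_i|k^i-|a_0|\ge|a_d|k-|a_0|$ combined with (ii) and $\Delta_L\ne 1$ (so $d\ge 1$) forces $n\ge 3$ for $k$ chosen large enough. Any odd prime $p\mid n$ then satisfies the hypothesis of (1), giving $({\mathbb Z}_n,{}_1*_k)$-colorability.

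For (4) and (5): (4) is immediate from (1), since for fixed $n$ there are only finitely many pairs $(k,\ell)\in({\mathbb Z}_n^\times)^2$ and for each the divisibility test $p\mid\Delta_L(-\overline{\ell}k)$ over the odd prime divisors of $n$ is a finite computation. For (5), (3) supplies an explicit upper bound $N:=|\Delta_L(-k_0)|$ on the minimal linear-coloring modulus, so one runs the (4)-algorithm for $n=3,4,\ldots,N$ and returns the smallest successful $n$. The main obstacle is Part~(1): cleanly identifying the $({\mathbb Z}_n,{}_{\ell}*_k)$-coloring matrix with the Alexander matrix specialized at $t=-\overline{\ell}k$, and extracting the ``some odd prime $p\mid n$ divides $\Delta_L$'' criterion from the rank condition on $M$ over the possibly non-integral-domain ring ${\mathbb Z}_n$. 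Once that determinantal--arithmetic dictionary is set up, the remainder of the proposition is essentially arithmetic bookkeeping.
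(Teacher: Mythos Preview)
Your overall strategy aligns with the paper's, but there is a genuine gap in your treatment of (1). You correctly identify the coloring matrix $M$ with the Alexander matrix specialized at $t=-\overline{\ell}k$, and for \emph{prime} $n$ the rank argument over the field $\mathbb{Z}_n$ does give the equivalence with $\Delta_L(-\overline{\ell}k)\equiv 0\pmod n$. But your ``Hence a non-trivial coloring modulo $n$ exists iff some prime $p\mid n$ divides $\Delta_L(-\overline{\ell}k)$'' is asserted without argument, and this is precisely where the composite-$n$ difficulty lives; a rank condition over the non-domain $\mathbb{Z}_n$ does not translate directly into a prime-divisor statement. The paper closes this gap with two explicit reductions that you do not supply: (i) if $L$ is $(\mathbb{Z}_p,{}_\ell*_k)$-colorable then multiplying a coloring by $n/p$ yields a non-trivial $(\mathbb{Z}_n,{}_\ell*_k)$-coloring; and (ii) conversely, given a non-trivial $(\mathbb{Z}_n,{}_\ell*_k)$-coloring $\varphi$, pick any prime $p\mid n$ and project to $\mathbb{Z}_p$; if the image is non-constant one is done, and if it is the constant $c$ then $(\varphi-c)/p$ is a non-trivial $(\mathbb{Z}_{n/p},{}_\ell*_k)$-coloring, so one inducts on the number of prime factors. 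You flag this as ``the main obstacle'' but never resolve it; without step (ii) the forward implication of (1) for composite $n$ is unproven.

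Your (3) is also weaker than what is stated. The proposition claims that \emph{every} $k$ satisfying (i) and (ii) works with $n=|\Delta_L(-k)|$, whereas you restrict to odd $k$ and to $k$ ``chosen large enough''. The paper handles even $k$ separately (condition (i) forces $a_0$ odd, and then $\Delta_L(-k)\equiv a_0\pmod 2$) and proves the sharp bound $|\Delta_L(-k)|>k+1$ for all admissible $k$ via a telescoping estimate $|a_d|k^d\ge |a_{d-1}|k^{d-1}+|a_d|k^{d-1}\ge\cdots$, followed by a non-trivial case analysis ruling out equality using $\Delta_L(1)=1$. Your bound $|a_d|k-|a_0|$ does not by itself force $n\ge 3$ for every $k$ meeting (i) and (ii). Parts (2), (4), (5) are handled correctly and essentially as in the paper.
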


\begin{remark}\label{rem:colorable}
(1) in the above proposition implies the three facts below.
Let $n$, $k$ and $\ell$ be integers 
as in Proposition \ref{lem:FundamentalProperty} (1). 
\begin{enumerate}
\item[(a)] 
A knot $L$ is $({\mathbb Z}_n, {}_{\ell} *_k)$-colorable
if $\Delta_L (-\overline{\ell}k) \equiv 0$ (mod $n$).
The converse is also true if $n$ is an odd prime natural number.
\item[(b)]
If a knot $L$ is $({\mathbb Z}_n, {}_{\ell} *_k)$-colorable,
and $m$ is a multiple of $n$,
then $L$ is $({\mathbb Z}_m, {}_{\ell} *_k)$-colorable.
\item[(c)]
If a knot $L$ is $({\mathbb Z}_n, {}_{\ell} *_k)$-colorable,
then $L$ is $({\mathbb Z}_p, {}_{\ell} *_k)$-colorable
for some odd prime natural number $p$ by which $n$ is divisible.
\end{enumerate}
\end{remark}

\begin{remark}
 In Remark \ref{rem:colorable} (a),
the converse is not true if $n$ is not prime.
 Actually, 
when $L$ is the trefoil knot,
$L$ is $({\mathbb Z}_{15}, {}_1 *_1)$-colorable,
while $\Delta_L (t) = t^2 -t + 1$ 
and $\Delta_L (-1) = 3 \not\equiv 0$ (mod $15$).
\end{remark}

\begin{figure}[htbp]
\begin{center}
\includegraphics[width=50mm]{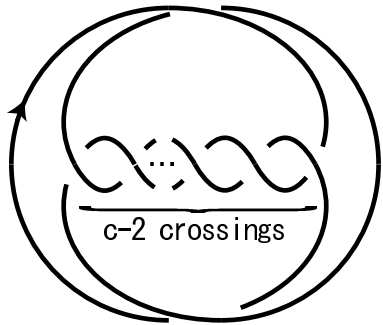}
\end{center}
\caption{twist knot with crossing number $c$}
\label{fig:twist}
\end{figure}

The {\it minimal quandle order} of a knot $L$, denoted by $q(L)$, 
is the minimal number among the orders of quandles 
each of which gives a non-trivial quandle coloring to a diagram of the knot, that is,
\[
q(L)={\rm min}\{ \mbox{the order of $Q$}~|~ Q\mbox{: a quandle such that $L$ is $Q$-colorable}\}.
\]

 In Section \ref{sect:minimal}, we study the minimal quandle orders for twist knots. We have the following theorem:

\begin{theorem}\label{theorem:twist}
Let $L$ be the twist knot with crossing number $c$ 
shown in Figure \ref{fig:twist}. 
\begin{enumerate}
\item[(1)] 
We have $q(L)=3$ if and only if  $c\equiv 0 $ $\pmod{3}$. 
In fact, $({\mathbb Z}_3,{}_1*_1)$ gives the minimal quandle order.  
\item[(2)] 
We have $q(L)=4$ if and only if  $c=12q+r$ for $r \in \{4,7,8,11\}$ and some non-negative integer $q$. 
In fact, the tetrahedron quandle gives the minimal quandle order. 
(See the beginnning of section 3 for the definition of the tetrahedron quandle.)
\item[(3)] 
We have $q(L)=5$ if and only if  $c=10(6q+r)+s$ for some non-negative integers $q,r,s$ 
with $(r,s) \in \{1,3\}\times \{4,7\}$ or $\{2,4\} \times \{6,9\}$. 
Especially when $s\in \{4,9\}$ (resp. $s\in \{6,7\}$), 
$({\mathbb Z}_5,{}_1*_1)$ (resp. $({\mathbb Z}_5,{}_1*_2)$) 
gives the minimal quandle order.
\item[(4)] 
We have $q(L)=7$ if and only if  $c=14(30q+r)+s$ for some non-negative integers $q,r,s$ 
with $ (r,s) \in \{ 0,4,10,12,22,24 \} \times \{5\}$, 
\newline
$\{ 1,3,13,15,21,25 \}\times \{11\}$, 
$\{  4,8,14,16,26,28 \}\times \{6\}$, 
\newline
$\{  5,7,13,17,23,25 \}\times \{0,3\}$ 
or $\{  5,7,17,19,25,29 \} \times \{12\}$.
Especially when $s\in \{5,12\}$ (resp. $\{0,3\}$ or $\{6,11\}$), 
$({\mathbb Z}_7,{}_1*_1)$ (resp. $({\mathbb Z}_7,{}_1*_2)$ 
or $({\mathbb Z}_7,{}_1*_3)$) gives the minimal quandle order.
\item[(5)] 
We have $q(L)\geq 8$, otherwise. 
\end{enumerate}
\end{theorem}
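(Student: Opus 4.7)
The plan is to translate each asserted equivalence $q(L)=n$ into an explicit congruence on the crossing number $c$, by combining the Alexander polynomial of the twist knot with the classification of connected quandles of small order and the criterion in Proposition~\ref{lem:FundamentalProperty}(1). A direct Seifert-matrix computation for the diagram in Figure~\ref{fig:twist} gives, up to a unit of ${\mathbb Z}[t^{\pm 1}]$,
\begin{equation*}
\Delta_L(t)=
\begin{cases}
m\,t^2-(2m+1)\,t+m, & c=2m+2\text{ even},\\
(m+1)\,t^2-(2m+1)\,t+(m+1), & c=2m+3\text{ odd}.
\end{cases}
\end{equation*}
In particular $\det(L)=|\Delta_L(-1)|=2c-3$, which is always odd.

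Next I would reduce the problem to a finite list of quandles. Since the diagram of a knot is connected, the image of any non-trivial coloring lies in a single orbit of the inner-automorphism action, so $q(L)$ is always realized by a \emph{connected} quandle. The connected quandles of order at most $7$ are, up to isomorphism: $({\mathbb Z}_3,{}_1*_1)$ of order $3$; the tetrahedron quandle $Q_T$, i.e.\ the Alexander quandle ${\mathbb F}_4={\mathbb F}_2[t]/(t^2+t+1)$, of order $4$; $({\mathbb Z}_5,{}_1*_1)$ and $({\mathbb Z}_5,{}_1*_2)$ of order $5$ (the $k=3$ case reducing to $k=2$ by Proposition~\ref{lem:FundamentalProperty}(2)); a finite list at order $6$; and $({\mathbb Z}_7,{}_1*_k)$ for $k=1,2,3$ of order $7$. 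A case-by-case inspection of the order-$6$ list rules each of them out for knots of odd determinant, and since $\det(L)=2c-3$ is odd, $q(L)=6$ never occurs for a twist knot.

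For each remaining quandle, Proposition~\ref{lem:FundamentalProperty}(1) and Remark~\ref{rem:colorable} convert colorability into a congruence on $\Delta_L$: for $({\mathbb Z}_p,{}_1*_k)$ with $p$ prime the condition is $\Delta_L(-k)\equiv0\pmod p$, while for $Q_T$ the condition is $\Delta_L(t)\equiv 0$ in ${\mathbb F}_4$, which after substituting the formula above reduces to $c\equiv 0$ or $3\pmod 4$. Plugging the Alexander polynomial into each condition gives a linear congruence on $m$, hence on $c$. Part~(1) is $3\mid 2c-3\Leftrightarrow c\equiv 0\pmod 3$. Part~(2) combines the $Q_T$ condition $c\equiv 0,3\pmod 4$ with the complement of part~(1) to yield $c\equiv 4,7,8,11\pmod{12}$. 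Parts~(3) and (4) are obtained by taking the union over $k$ of the $p=5$ (resp.\ $p=7$) conditions -- which live modulo $10$ (resp.\ $14$) -- and intersecting with the complements of the previous parts; CRT repackages the results modulo $\mathrm{lcm}(10,12)=60$ for (3) and modulo $\mathrm{lcm}(14,60)=420$ for (4). Part~(5) is the complement of (1)--(4).

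The main obstacle is the combinatorial bookkeeping in part~(4): the six residues modulo $14$ produced by the three $p=7$ quandles must be intersected with the complements of the part~(1)--(3) conditions and then re-expressed in the compact form $14(30q+r)+s$ with the prescribed $(r,s)$-pairs. A secondary delicate point is verifying that no order-$6$ quandle yields a coloring of any twist knot, which uses the parity of $\det(L)=2c-3$ together with the small-quandle classification.
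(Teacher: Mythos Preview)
Your overall strategy matches the paper's: compute $\Delta_L$ for the twist knot, reduce to connected quandles of order $\le 7$, convert each into a congruence on $c$ via Proposition~\ref{lem:FundamentalProperty}(1) (and the ${\mathbb F}_4$ criterion for the tetrahedron quandle), and assemble with CRT. The Alexander polynomial and $\det(L)=2c-3$ are correct, and the bookkeeping for parts (1)--(4) is the same exercise the paper carries out case by case in Lemmas~\ref{lem:twist1} and~\ref{lem:twist2}.

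There is, however, a genuine error in your handling of the order-$6$ quandles. You claim that ``a case-by-case inspection of the order-$6$ list rules each of them out for knots of odd determinant.'' This is false: the trefoil has determinant $3$, which is odd, yet it \emph{is} colorable by $QS6$ (for instance, with the labels $1,3,5$ on the three arcs, one checks directly from the table $M_{QS6}$ that $1*3=5$, $3*5=1$, $5*1=3$). So parity of the determinant does not exclude order-$6$ colorings. What actually excludes $q(L)=6$---for \emph{every} knot, not just twist knots---is Lemma~\ref{coloringbyQS6}: each of $QS6$ and $QS6'$ admits a surjective quandle homomorphism onto $({\mathbb Z}_3,{}_1*_1)$ whose fibers are trivial subquandles of size two, so any non-trivial $QS6$- or $QS6'$-coloring pushes forward to a non-trivial Fox $3$-coloring. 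Thus order-$6$ colorability forces $q(L)=3$, and the value $6$ is never attained. You need this extension/projection argument (or an equivalent one); the determinant-parity shortcut does not work.
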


\section{Fundamental properties}\label{sec:Fundamental properties}

A {\it quandle} is defined to be a set $Q$ with a binary operation $* : Q \times  Q \to Q$ 
satisfying the following properties: 
(1) For any $a\in Q$, $a*a=a$. 
(2) For any pair of $a, b \in Q$, there exists a unique $c\in Q$ such that $c*b=a$. 
(3) For any triple of $a,b,c \in Q$, $(a*b)*c=(a*c)*(b*c)$.
%An important class of quandles is Alexander quandles.
 Let $Q_1, Q_2$ be quandles.
 A map $f : Q_1 \rightarrow Q_2$ is said to be a {\it homomorphism}
if $f(a*b)=f(a)*'f(b)$ holds for all $a,b \in Q_1$,
where $*$ and $*'$ are quandle operations in $Q_1$ and $Q_2$ respectively.
 If such a map $f$ is bijective,
then it is called an {\it isomorphism},
and we say that $Q_1$ and $Q_2$ are {\it isomorphic}. 

Alexander quandles form an important class of quandles.
Let $\Lambda $ be the Laurent polynomial ring $\mathbb Z[t, t^{-1}]$. 
Then, any $\Lambda $-module $M$ is a quandle with the operation $*: M\times M \to M$ defined by $a*b = ta+(1-t)b$, 
that is called an {\it Alexander quandle}.

Let $Q$ be a finite quandle. Let $D$ be a diagram of a given oriented classical link $L$, 
and let ${\mathcal S}(D)$ be the set of strands.  
A {\it (quandle) coloring} is a map $\psi:{\mathcal S} (D) \to Q$ such that at every crossing, 
the relation $\psi (t) * \psi(s) = \psi(u)$ holds, 
where $t$, $s$ and $u$ are the strands around the crossing located as shown in Figure~\ref{fig:CrossingCondition}. 
A quandle coloring is called {\it trivial} if it is a constant map. 
If $D$ admits a non-trivial quandle coloring, the link $L$ is called {\it $Q$-colorable} or {\it quandle colorable by $Q$}.   
It is well-known that $Q$-colorability is an invariant of a link.

 The crossing condition 
$\ell\varphi(t)+k\varphi(u)=(\ell+k)\varphi(s)$ in ${\mathbb Z}_n$
in Section \ref{sect:introduction}
can be deformed to
$\varphi(u) = ((\ell+k)\varphi(s)-\ell\varphi(t))\overline{k}$, 
where $\overline{k}$ means the element in $\mathbb Z_n$ with $\overline{k} k=1$. 
 If we define the operation ${}_{\ell}*_k$ in ${\mathbb Z}_n$
by $a\ {}_{\ell}*_k b = ((\ell+k)b-\ell a)\overline{k}$
for any $a, b \in {\mathbb Z}_n$,
then this operation gives a quandle operation in ${\mathbb Z}_n$. 
This implies that linear colorings are a kind of quandle coloring.  
In fact, 
the quandle $({\mathbb Z}_n,  {}_{\ell}*_k)$ is isomorphic 
to the Alexander quandle $\mathbb Z_n[t,t^{-1}]/(t+\ell \overline{k})$. 
(Since $a {}_{\ell}*_k b = (-\ell\bar{k})a + (1 - (-\ell\bar{k}))b$, 
the map $f : ({\mathbb Z}_n,  {}_{\ell}*_k) \to \mathbb Z_n[t,t^{-1}]/(t+\ell \overline{k})$ defined by $f(a)=a$ 
gives a quandle isomorphism.)
Hence we have the following lemma. 

\begin{lemma}\label{lem:AffineCyclicQuandle}
Let $\ell, \ell',  k, k'$ be positive integers 
each of which is coprime with $n$.
If $\ell \overline{k} = \ell' \overline{k'}$ holds, 
then the two quandles $({\mathbb Z}_n,  {}_{\ell}*_k)$ 
and $({\mathbb Z}_n,  {}_{\ell'}*_{k'})$ are isomorphic. 
This implies that the notion of $({\mathbb Z}_n, {}_\ell *_k)$-coloring 
is equivalent 
to that of $({\mathbb Z}_n, {}_{\ell\overline{k}}*_1)$-coloring 
(or $({\mathbb Z}_n, {}_1*{}_{\overline{\ell}k})$-coloring).
\end{lemma}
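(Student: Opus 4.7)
The plan is to read off both assertions directly from the normal form for the operation that is already derived in the paragraph preceding the lemma. First I would rewrite
\[
a \, {}_{\ell}*_{k}\, b = ((\ell+k)b - \ell a)\overline{k} = (-\ell\overline{k})\,a + \bigl(1-(-\ell\overline{k})\bigr) b,
\]
using $(\ell+k)\overline{k} = 1 + \ell\overline{k}$ in ${\mathbb Z}_n$. Setting $T := -\ell\overline{k}\in{\mathbb Z}_n$, the operation ${}_{\ell}*_{k}$ is literally the Alexander quandle operation $a*b = Ta + (1-T)b$ on ${\mathbb Z}_n$, i.e.\ the quandle ${\mathbb Z}_n[t,t^{-1}]/(t-T)$. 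Thus the pair $(\ell,k)$ enters only through the single element $\ell\overline{k}\in{\mathbb Z}_n$.

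With that reformulation the first claim is immediate: under the hypothesis $\ell\overline{k} = \ell'\overline{k'}$ the two parameters $T,T'$ coincide, so the binary operations agree pointwise on ${\mathbb Z}_n$ and the identity map is a quandle isomorphism. (Invertibility of $k,k',\ell,\ell'$ in ${\mathbb Z}_n$, needed to make sense of $\overline{k}$ etc., is given by the coprimality assumptions.)

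For the second statement I would simply compute the parameter $\ell\overline{k}$ for each of the three quandles in the lemma and observe they coincide. For $({\mathbb Z}_n,{}_{\ell\overline{k}}*_{1})$ the parameter is $(\ell\overline{k})\cdot\overline{1}=\ell\overline{k}$; for $({\mathbb Z}_n,{}_{1}*_{\overline{\ell}k})$ it is $1\cdot\overline{\overline{\ell}k}$, which equals $\overline{\overline{\ell}}\cdot\overline{k}=\ell\overline{k}$ since inversion is multiplicative on ${\mathbb Z}_n^\times$. Both match the parameter $\ell\overline{k}$ of $({\mathbb Z}_n,{}_{\ell}*_{k})$, so the first part of the lemma gives isomorphisms with these two quandles, and hence an equivalence of the corresponding coloring notions (since quandle colorability is preserved under quandle isomorphism).

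I do not anticipate a genuine obstacle; the whole argument is an algebraic identity in ${\mathbb Z}_n$ plus a remark about isomorphic target quandles yielding identical coloring theory. The only care needed is to keep track of the coprimality conditions that make each inverse well-defined.
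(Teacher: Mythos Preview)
Your argument is correct and is essentially the paper's own: the paragraph preceding the lemma already shows that the identity map exhibits $({\mathbb Z}_n,{}_\ell*_k)$ as the Alexander quandle ${\mathbb Z}_n[t,t^{-1}]/(t+\ell\overline{k})$, so the quandle depends only on $\ell\overline{k}$, and the lemma follows immediately. Your explicit verification that the parameters agree for ${}_{\ell\overline{k}}*_1$ and ${}_1*_{\overline{\ell}k}$ is exactly the kind of unpacking the paper leaves to the reader.
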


\begin{remark}\label{remark:Nelson}
Nelson \cite{Nelson} studied a class of Alexander quandles called {\it linear}, 
which corresponds to our quandles. 
By his result, 
we see that for positive integers $\ell$ and $k$ 
each of which is coprime with $n$,  
the quandle $({\mathbb Z}_n,  {}_{\ell}*_k)$ is trivial 
if and only if GCD$(n ,\ell + k )=n$,
and that 
the quandle $({\mathbb Z}_n,  {}_{\ell}*_k)$ is indecomposable (or connected)
if and only if GCD$(n ,\ell + k )=1$. 
In fact, 
when $0 < d =$GCD$(n ,\ell + k ) < n$, 
the quandle $({\mathbb Z}_n,  {}_{\ell}*_k)$ 
is decomposed into $d$ orbits
$C_i = \{ i, d+i, \cdots, ((n/d)-1)d+i \}$, 
$i =0,1,\cdots, d-1$,
since $i {}_{\ell}*_k j = \displaystyle\frac{\ell+k}{d}\bar{k} d(j-i) +i$.
See \cite{NW} for details of orbit decompositions.
The orbit $C_i$ forms a quandle under the operation ${}_{\ell}*_k$
isomorphic to $({\mathbb Z}_{n/d}, {}_{\ell}*_k)$
by the map $md+i \mapsto m$.
Since an image of a coloring of a knot is contained in an orbit
we can obtain a non-trivial linear $m$-coloring 
with an indecomposable quandle $({\mathbb Z}_m, {}_{\ell}*_k)$
from any non-trivial linear $n$-coloring
by repeating the operations of taking an orbit.
\end{remark}

\begin{figure}[htbp]
\begin{center}
\includegraphics[width=20mm]{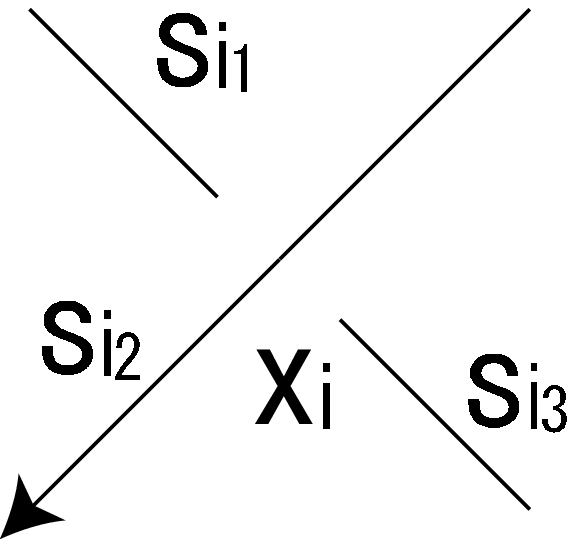}
\end{center}
\caption{}
\label{fig:CrossingCondition2}
\end{figure}

We prove Proposition \ref{lem:FundamentalProperty}. 
Note that for the Alexander polynomial of a knot 
with the form given in Section \ref{sect:introduction}, 
the following properties are well-known:  
(i) For any $i \in \{ 0, 1, 2, \cdots, d \}$, $a_{d-i} = a_i$  
and (ii) $\Delta_L(1)= a_d + a_{d-1} + \cdots + a_1 + a_0 = 1$. 
Additionally, by the properties (i) and (ii), 
we see that $d$ is an even number and $a_{d/2}$ is an odd number. 

We need to prove Remark \ref{rem:colorable} (a), (b) and (c) 
before proving Proposition \ref{lem:FundamentalProperty}(1).

\begin{proof}[Proof of Remark \ref{rem:colorable} (a)]
 By Lemma \ref{lem:AffineCyclicQuandle},
it is enough to consider the case of $\ell=1$.
 The proof is almost the same as that for the usual $n$-coloring with $k=1$ and $\ell=1$, refer to \cite{L}.

 Let $D$ be a diagram of the knot $L$.
 We number the crossings of $D$,
and call them $x_1, x_2, \cdots, x_c$.
 Let $s_1,s_2,\cdots,s_c$ be the strands of $D$.
 Then the crossing condition at $x_i$ is of the form
$\varphi(s_{i_1})+k\varphi(s_{i_3})=(k+1)\varphi(s_{i_2})$,
where $s_{i_2}$ is the strand going over the crossing $x_i$,
$s_{i_1}$ the strand on the right hand of $s_{i_2}$
and $s_{i_3}$ the strand on the left hand of $s_{i_2}$.
 See Figure \ref{fig:CrossingCondition2}.
 Let $M$ be  the coefficient matrix of the system of equalities of crossing condition for all the crossings.
 The $i_1$th element is $1$, 
the $i_2$th element is $-k-1$ 
and the $i_3$th element is $k$ 
in the $i$th row of $M$ when $i_1 \ne i_2$ and $i_2 \ne i_3$.
 If $i_1 = i_2$, then the $i_1$th element is $1+k$.
 The $i_3$th element is $k-(k+1)=-1$ if $i_2 = i_3$.
 Note that $M$ is a square matrix of order $c$.

 By Inoue \cite{Inoue}, it is proved that $M$ coincides 
with the Alexander matrix $A$ with $t$ replaced by $-k$,
where $A$ is calculated from the Wirtinger presentation of the fundamental group $G(L)=\pi_1(S^3-L)$
by Fox's derivative. See \cite{CF} for details of Fox's derivative.
 The Alexander polynomial $\Delta_L (t)$ is the greatest common divisor
of all the minor determinant of order $c-1$ of the Alexander matrix $A$.

 Hence, 
if $\Delta_L(-k)$ is a multiple of an integer $n$ larger than or equal to $2$,
then all the minor determinant of order $c-1$ of $M$ is equivalent to $0$ modulo $n$,
and hence the system of equations $M {\bf x} = {\bf 0}$ 
with the vector ${\bf x}$ of variables
has at least $n^2$ solutions.
 This implies that the coloring number is larger than or equal to $n^2$,
which shows that $L$ admits a non-trivial $({\mathbb Z}_n, {}_1*_k)$-coloring.

 Suppose that $n$ is an odd prime natural number,
and $L$ is $({\mathbb Z}_n, {}_1*_k)$-colorable.
 Then ${\mathbb Z}_n$ is a field.
 Since the system $M {\bf x} = {\bf 0}$ has more than $n$ solutions,
the rank of the solution space is $2$ or larger.
 Hence the rank of $M$ is smaller than or equal to $c-2$,
and all the minor determinant of order $c-1$ of $M$ 
is equivalent to $0$ modulo $n$. 
 Thus we have $\Delta_L(-k) \equiv 0$ (mod $n$).
\end{proof}

\begin{proof}[Proof of Remark \ref{rem:colorable} (b)]
A diagram $D$ of $L$ has 
a non-trivial $(\mathbb Z_{n}, {}_{\ell} * _k)$-coloring, say $\phi$. 
Define $\varphi : {\mathcal S}(D) \to \mathbb Z_m$ 
by $\varphi (s) = (m/n) \times \phi (s)'$, 
where $\phi (s)'$ means a representative element of $\phi (s)\in \mathbb Z_n$. 
We easily see that the map $\varphi $ is a non-trivial $(\mathbb Z_{m}, {}_{\ell} * _k)$-coloring of $D$.  
Thus, $L$ is $(\mathbb Z_{m}, {}_{\ell} * _k)$-colorable.
\end{proof}

\begin{proof}[Proof of Remark \ref{rem:colorable} (c)]
 The proof proceeds by induction on the number of prime factors of $n$.
 Let $D$ be a diagram of a knot,
and $\varphi : {\mathcal S}(D) \rightarrow {\mathbb Z}_n$ a non-trivial $({\mathbb Z}_n, {}_{\ell}*_k)$-coloring of $D$.
 We take an arbitrary prime divisor of $n$, say $p$.
 Let $\pi : {\mathbb Z}_n \ni z \mapsto {\rm Mod}[z,p] \in {\mathbb Z}_p$ be the natural projection map
with ${\rm Mod}[z,p] \equiv z$ (mod $p$).
 If the composition $\pi \circ \varphi$ is not a constant map,
then it gives a non-trivial $({\mathbb Z}_p, {}_{\ell}*_k)$-coloring.
 In this case, $p \ne 2$ 
since any linear $2$-coloroing is trivial for any knot.

 We consider the case where $\pi \circ \varphi$ is a constant map.
 Set $\pi \circ \varphi (s) = c$ for all strands $s \in {\mathcal S}(D)$,
where $c$ is a constant in ${\mathbb Z}_p$.
 Then $\varphi(s)-c$ is a multiple of $p$ for every $s \in {\mathcal S}(D)$,
and $\psi(s) = (\varphi(s)-c)/p$ gives a non-trivial coloring of $D$ by $({\mathbb Z}_{n/p}, {}_{\ell}*_k)$.
 Hence there is an odd prime natural number $p'$
such that $p'$ is a divisor of $n/p$ 
and that $D$ is linear $p'$-colorable with $({\mathbb Z}_{p'}, {}_{\ell}*_k)$
by the assumption of induction.
 This completes the proof.
\end{proof}

\begin{proof}[Proof of Proposition \ref{lem:FundamentalProperty}(1)]
If $L$ is $({\mathbb Z}_n, {}_{\ell}*_k)$-colorable,
then $L$ is $({\mathbb Z}_p, {}_{\ell}*_k)$-colorable
for some odd prime divisor $p$ of $n$ by Remark \ref{rem:colorable} (c).
Hence $\Delta_L (-k) \equiv 0$ (mod $p$) by Remark \ref{rem:colorable} (a).

If $\Delta_L (-k) \equiv 0$ (mod $p$) for some odd prime divisor $p$ of $n$,
then Remark \ref{rem:colorable} (a) assures
that $L$ is $(\mathbb Z_{p}, {}_{\ell} * _k)$-colorable. 
Hence $L$ is $(\mathbb Z_{n}, {}_{\ell} * _k)$-colorable 
by Remark \ref{rem:colorable} (b). 
\end{proof}

\begin{proof}[Proof of Proposition \ref{lem:FundamentalProperty}(2)]
 Let $\Delta_L (t) = a_d t^d + a_{d-1} t^{d-1} + \cdots + a_1 t + a_0$ 
be the Alexander polynomial of $L$.
 By the assumption that $L$ is $({\mathbb Z}_n, {}_1*_k)$-colorable and 
by Remark \ref{rem:colorable} (a),
we have 
$\Delta_L (-k) \equiv 0$ (mod $n$).

 On the other hand,
$\Delta_L (-\overline{k})
= a_d (-\overline{k})^d + a_{d-1}(-\overline{k})^{d-1} 
+ \cdots + a_1 (-\overline{k}) + a_0
= (-\overline{k})^d (a_d + a_{d-1} (-k) 
+ \cdots + a_1 (-k)^{d-1} + a_0 (-k)^d)$.
Since $a_{d-i} = a_i$ for any $i \in \{ 0, 1, 2, \cdots, d \}$, we have 
$\Delta_L (-\overline{k})=(-\overline{k})^d (a_0 + a_{1} (-k) 
+ \cdots + a_{d-1} (-k)^{d-1} + a_d (-k)^d)=(-\overline{k})^d \Delta_L (-k).$
This implies 
%that 
$\Delta_L (-\overline{k}) \equiv 0 \pmod{n}$.
Hence $L$ is $({\mathbb Z}_n, {}_1*_{\overline{k}})$-colorable 
by Remark \ref{rem:colorable} (a),
and $({\mathbb Z}_n, {}_k*_1)$-colorable by Lemma \ref{lem:AffineCyclicQuandle}.
\end{proof}

\begin{proof}[Proof of Proposition \ref{lem:FundamentalProperty}(3)]
 Let $L$ be a knot, 
and $\Delta_L (t)=a_d t^d + a_{d-1} t^{d-1} + \cdots + a_1 t + a_0$ 
the Alexander polynomial of $L$.
 Let $k$ be a positive integer 
with (i) ${\rm GCD}(k, a_0)=1$ 
and (ii) $k \ge (\displaystyle\max_{1\le i \le d-1} |a_i /a_d|) + 1$.
 Note that $k \ge 2$ because $a_{d/2}$ is odd and non-zero.

 We show that $|\Delta_L (-k)|$ is an odd integer larger than $k+1$, which is larger than $2$,
and that $|\Delta_L (-k)|$ is coprime with $k$.

 Since $\Delta_L (-k) \equiv a_0$ (mod $k$), 
we have ${\rm  GCD}(\Delta_L (-k), k)={\rm GCD}(a_0,k)=1$
by Euclidean method of mutual division and the condition (i).
%%%%%%%%%%%%
% The polynomial function $\Delta_L (t)$ does not take the value $\pm 1$
%except for at most $2d$ values of $t$
%by the fundamental theorem of algebra.
% For such $k$ with $\Delta_L (-k) \ne \pm 1$,
%the knot $L$ is $({\mathbb Z}_{|\Delta_L (-k)|}, {}_1*_k)$-colorable.
% In fact, 
%%%%%%%%%%%%

 Next we show that  $\Delta_L (-k)$ is odd.
 When $k$ is odd,
$a_{d-i} (-k)^{d-i} + a_i (-k)^i 
\equiv a_{d-i} + a_i 
= 2 a_i
\equiv 0$ (mod $2$)
for any integer $i$ with $0 \le i < d/2$.
 Hence we have 
$\Delta_L (-k)
\equiv a_{d/2} (-k)^{d/2}
\equiv a_{d/2}$ (mod $2$).
%Moreover,
%when $k=1$, we have 
 This and $1 = \Delta_L (1) \equiv a_{d/2}$ (mod $2$)
imply that $\Delta_L (-k)$ is odd.
 When $k$ is even,
$\Delta_L (-k)
\equiv a_0
\equiv 1$ (mod $2$).
 Note that $a_0$ is odd
since GCD$(a_0,k)=1$.

 We also show that $|\Delta_L (-k)| > k+1$.
By the condition (ii), we have 
\newline
$|a_d| (-k)^d 
=|a_d|k^d
\ge |a_d|(|a_{d-1}/a_d|+1)k^{d-1}
=|a_{d-1}|k^{d-1} + |a_d|k^{d-1}$
\newline
$\ \ \ge |a_{d-1}|k^{d-1} + |a_d|(|a_{d-2}/a_d|+1)k^{d-2}
=|a_{d-1}|k^{d-1} + |a_{d-2}|k^{d-2} + |a_d|k^{d-2}$
\newline
$\ \ \ge \cdots 
=|a_{d-1}|k^{d-1} + |a_{d-2}|k^{d-2} + \cdots + |a_1|k + |a_d|k\ \cdots$ (iii).
\newline
 Recall that $a_d = a_0$ for the Alexander polynomial.
 We consider the case where $a_d$ and $a_0$ are negative. 
 (A similar argument will do for the other case where $a_d$ and $a_0$ are positive.)
 Then, we have 
\newline
$\Delta_L (-k) 
=a_d (-k)^d + a_{d-1} (-k)^{d-1} + \cdots + a_1 (-k) + a_0$
\newline
$\ \ \le \{-|a_{d-1}|k^{d-1}-|a_{d-2}|k^{d-2}-\cdots-|a_1|k-|a_d|k \}$ 
\newline
$\ \ \ \ \ \ + a_{d-1}(-k)^{d-1} + a_{d-2}(-k)^{d-2} + \cdots + a_1(-k) + a_0$
\newline
$\ \ =(-|a_{d-1}|-a_{d-1})k^{d-1} + (-|a_{d-2}|+a_{d-2})k^{d-2}+\cdots+(-|a_1|-a_1)k-|a_d|k+a_0$
\newline
$\ \ \le -|a_d|k+a_0
= a_d k + a_0 = a_d(k+1) \le -(k+1)\ \cdots$(iv)
% \le -2$. 
\newline
 We show that the equality $\Delta_L (-k) = -(k+1)$ does not hold.
 If equality holds on (iii), 
then $k=|a_i/a_d|+1$ holds for all $i$ with $1 \le i \le d-1$,
and hence there is an positive integer $m$
with $m |a_d| = |a_{d-1}| = |a_{d-2}| = \cdots = |a_2| = |a_1| = m |a_0|$.
 If the equality holds at the second inequality of (iv),
then $a_i = (-1)^i |a_i|$ holds for any $i$ with $1 \le i \le d-1$.
 If the equality holds at the third inequality of (iv),
then $a_0 = a_d = -1$.
 Thus the condition $\Delta_L (-k) = -(k+1)$ leads to
$a_d = -1,\ a_{d-1}=-m,\ a_{d-2}=+m, \cdots,\ a_2 = +m,\ a_1 = -m,\ a_0 = -1$,
which contradict $\Delta_L (1)=+1$.
%Since we proved that $\Delta_L (-k)$ is odd, the equality does not hold.
%Hence we have $|\Delta_L (-k)|> 2$. 
%When $a_d$ and $a_0$ are positive, this is also proved by a similar argument.

 Put $n=|\Delta_L (-k)|$.
 Then, we have proved that $n$ is an odd integer 
with ${\rm GCD}(n,k)=1$ and $n > k+1 > 2$. 
 Hence $L$ is $({\mathbb Z}_{|\Delta_L (-k)|}, {}_1*_k)$-colorable  
by Remark \ref{rem:colorable} (a).
\end{proof}

\begin{proof}[Proof of Proposition \ref{lem:FundamentalProperty}(4)]
 If a knot $L$ is linear $n$-colorable,
then it is $({\mathbb Z}_n, {}_1*_k)$-colorable
for some integer $k$ with $1 \le k \le n-2$
by Lemma \ref{lem:AffineCyclicQuandle}
and Remark \ref{remark:Nelson},
and hence by Proposition \ref{lem:FundamentalProperty}(1), 
there exists an odd prime divisor $p$ of $n$
such that $\Delta_L (-k) \equiv 0$ (mod $p$).
 If there is a multiple of $p$ in $\{\Delta_L (-k)~|~ k= 1,2,\ldots,p-1 \}$ for some prime divisor $p$ of $n$, 
then $L$ is linear $n$-colorable.
 If not, then $L$ is not linear $n$-colorable.
\end{proof}

\begin{remark}
 Practically, we do not need to calculate all $\Delta_L (-k)$.
 For example, if $\Delta_L (-2) \not\equiv 0$ (mod $5$),
then $\Delta_L (-3) \not\equiv 0$ (mod $5$)
by  Proposition \ref{lem:FundamentalProperty} (2) and $2\cdot 3 \equiv 1$ (mod $5$).
 Note that $({\mathbb Z}_5, {}_1*_2)$ and $({\mathbb Z}_5, {}_1*_3)$
are not isomorphic as quandles.
\end{remark}

\begin{proof}[Proof of Proposition \ref{lem:FundamentalProperty}(5)]
 We perform the procedure 
described in the proof of Proposition \ref{lem:FundamentalProperty}(4)
for $n=3,4,5, \cdots$.
 This sequence of procedures terminates
when we find an integer $n$
such that the knot is linear $n$-colorable.
 Such an integer $n$ exists 
and is bounded explicitly by Proposition \ref{lem:FundamentalProperty}(3).
\end{proof}

\section{Minimal quandle order}\label{sect:minimal}

The Alexander quandle $\mathbb Z_2[t,t^{-1}]/(t^2+t+1)$ has four elements. 
It is isomorphic to the quandle consisting of $120$ degree rotations of a regular
tetrahedron, and hence the quandle is called the tetrahedron quandle and we denote it by $S_4$. 
A knot $L$ is colorable with the tetrahedron quandle
if and only if the Alexander polynomial $\Delta_L (t)$ of $L$ 
is equivalent to $0$ in ${\mathbb Z}_2[t,t^{-1}]/(t^2+t+1)$, 
%see \cite{Bae, Inoue}.
see \cite{Inoue}.

It is known that indecomposable quandles of order at least $2$ and at most $5$
are 
$({\mathbb Z}_3, {}_1*_1)$,
$S_4$, 
$({\mathbb Z}_5, {}_1*_1)$,
$({\mathbb Z}_5, {}_1*_2)$,
$({\mathbb Z}_5, {}_1*_3)$, 
there are exactly two indecomposable quandles of order $6$, say $QS6$ and $QS6'$, 
and every indecomposable quandle of order $7$ is 
$({\mathbb Z}_7, {}_1*_1)$,
$({\mathbb Z}_7, {}_1*_2)$,
$({\mathbb Z}_7, {}_1*_3)$,
$({\mathbb Z}_7, {}_1*_4)$, or 
$({\mathbb Z}_7, {}_1*_5)$, 
refer to \cite{Nelson, V}. 
The following are the quandle operation matrices of $QS6$ and $QS6'$:
\begin{center}
$M_{QS6}=\left( \begin{array}{cccccc}
 1& 1& 5& 6& 3& 4\\
 2& 2& 6& 5& 4& 3\\
 5& 6& 3& 3& 1& 2\\
 6& 5& 4& 4& 2& 1\\
 3& 4& 1& 2& 5& 5\\
 4& 3& 2& 1& 6& 6
\end{array} \right)$, \ \ 
$M_{QS6'}=\left( \begin{array}{cccccc}
 1& 1& 6& 5& 3& 4\\
 2& 2& 5& 6& 4& 3\\
 5& 6& 3& 3& 2& 1\\
 6& 5& 4& 4& 1& 2\\
 4& 3& 1& 2& 5& 5\\
 3& 4& 2& 1& 6& 6
\end{array} \right)$,
\end{center}
where the $(i, j)$-element is $i*j$.
We have the next lemma:

\begin{lemma}\label{coloringbyQS6} 
If a knot is quandle colorable with an indecomposable quandle of order $6$, 
then it is $3$-colorable.
\end{lemma}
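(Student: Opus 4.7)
The plan is to produce a surjective quandle homomorphism $\pi:Q\to(\mathbb{Z}_3,{}_1*_1)$ for each $Q\in\{QS6,QS6'\}$, and use it to push a non-trivial $Q$-coloring forward to a non-trivial Fox $3$-coloring.

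Inspecting the matrices $M_{QS6}$ and $M_{QS6'}$, one sees that the six elements pair up naturally as $\{1,2\},\{3,4\},\{5,6\}$: every $2\times 2$ block of either matrix has all its entries in a single pair. I would therefore define
\[
\pi(1)=\pi(2)=0,\qquad \pi(3)=\pi(4)=1,\qquad \pi(5)=\pi(6)=2,
\]
and verify $\pi(i*j)=\pi(i)\,{}_1*_1\,\pi(j)$ by checking the nine $2\times 2$ blocks of each matrix against the Cayley table of $(\mathbb{Z}_3,{}_1*_1)$, where $a\,{}_1*_1\,b=2b-a$. This is the bulkiest step, but it is a finite and mechanical inspection. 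Granting this, $\pi$ is a surjective quandle homomorphism from $QS6$ (resp.\ $QS6'$) onto $(\mathbb{Z}_3,{}_1*_1)$.

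Now suppose the knot $L$ admits a non-trivial coloring $\varphi:\mathcal{S}(D)\to Q$ for some diagram $D$. Then $\pi\circ\varphi$ is a $(\mathbb{Z}_3,{}_1*_1)$-coloring of $D$. If it is non-constant, we immediately obtain a non-trivial Fox $3$-coloring and the lemma is proved. It remains to rule out the case where $\pi\circ\varphi$ is constantly equal to some $c\in\mathbb{Z}_3$. In that case $\mathrm{Im}(\varphi)$ lies in the single fibre $\pi^{-1}(c)$, which is one of the pairs $\{1,2\},\{3,4\},\{5,6\}$. A glance at the three diagonal $2\times 2$ blocks of $M_{QS6}$ and $M_{QS6'}$ reveals that each fibre, with the restricted operation, is the trivial quandle on two elements (i.e.\ $a*b=a$ for all $a,b$ in the fibre). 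The crossing condition then reduces to $\varphi(u)=\varphi(t)$ at every crossing; since the strands of a knot diagram are joined into one strand by successively identifying the two under-arcs at each crossing, $\varphi$ would be forced to be constant, contradicting non-triviality. Hence $\pi\circ\varphi$ must be non-constant, as required.

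The main obstacle is simply the routine but unavoidable verification that the pairing $\{\{1,2\},\{3,4\},\{5,6\}\}$ really is a quandle congruence on both $QS6$ and $QS6'$ whose quotient is $(\mathbb{Z}_3,{}_1*_1)$; once that is in place, the rest of the argument is formal. One could also phrase the proof slightly more conceptually by observing that this congruence corresponds to the abelianisation-like quotient that collapses each $2$-element orbit of the inner automorphism generated by right multiplication, but the direct matrix check is the most transparent route.
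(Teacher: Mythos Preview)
Your proof is correct and follows essentially the same approach as the paper: both construct the surjective quandle homomorphism $\pi$ collapsing the pairs $\{1,2\},\{3,4\},\{5,6\}$ onto $(\mathbb{Z}_3,{}_1*_1)$, push forward a non-trivial $Q$-coloring, and rule out constancy by observing that each fibre is a trivial two-element subquandle. Your write-up is in fact slightly more explicit than the paper's about why a trivial-subquandle coloring of a \emph{knot} diagram must be constant.
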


\begin{proof} 
The quandle $QS6$ is the extension of the quandle $({\mathbb Z}_3, {}_1*_1)$, 
that is, there is a surjective quandle homomorphism 
$f : QS6 \to ({\mathbb Z}_3, {}_1*_1)$ 
such that  for any element of $({\mathbb Z}_3, {}_1*_1)$, 
the cardinality of the inverse image by $f$ is constant. 
For example, define $f$ 
by $f(1)=0$, $f(2)=0$, $f(3)=1$, $f(4)=1$, $f(5)=2$ and $f(6)=2$. 
The inverse image of each element of $({\mathbb Z}_3, {}_1*_1)$ 
forms a trivial subquandle with two elements.
 
For any knot diagram $D$ and any non-trivial coloring $C$ of $D$ by $QS6$, 
put the composition $f\circ C$. 
It is a quandle coloring of $D$ by $({\mathbb Z}_3, {}_1*_1)$. 
We show that the coloring is non-trivial:
Assume that the coloring $f\circ C$ is trivial 
and denote by $a$ the element in ${\rm Im } (f\circ C)$. 
Then we have ${\rm Im }C \subset f^{-1}(a)$, 
which implies that the coloring $C$ is essentially the quandle coloring 
by the subquandle $f^{-1}(a)$ forms. 
However it leads to a contradiction since the subquandle is a trivial quandle, 
and hence $f\circ C$ is a non-trivial coloring. 
Therefore any $QS6$-colorable knot is $3$-colorable. 

The quandle $QS6'$ has also the same properties as $QS6$ has. 
Hence by the same argument, 
we see that any $QS6'$-colorable knot is $3$-colorable.
\end{proof}

Lemma~\ref{coloringbyQS6} implies that for a knot $L$ 
which is quandle colorable with an indecomposable quandle of order $6$, 
$q(L)=3$ holds.  

%Hence, when $5$ (resp. $7$) is the minimal number of $n$
%for which a knot $L$ is affine $n$-colorable,
%$5$ (resp. $7$) is the minimal number of $n$
%for which $L$ is quandle-colorable with a quandle of order $n$,
%if it is not colorable with the tetrahedron quandle.
% A knot is $({\mathbb Z}_5, {}_1*_3)$-colorable
%if and only if it is $({\mathbb Z}_5, {}_1*_2)$-colorable
%by Lemma \ref{lem:FundamentalProperty} (2).
% Hence we can omit to consider $({\mathbb Z}_5, {}_1*_3)$-coloring.

% An easy calculation as above shows 
%that the knots $7_6$, $9_{18}$ and $10_{133}$ 
%are $({\mathbb Z}_5, {}_1*_2)$-colorable,
%%and that these are not colorable with the tetrahedron quandle.
%and $5$ is the minimal order of a quandle 
%with which these knots are quandle-colorable.
% Similarly,
%$6_3$, $7_5$ and $10_{130}$ are $({\mathbb Z}_7, {}_1*_3)$-colorable,
%and $7$ is minimal for these knots.

 We consider twist knots as shown in Figure \ref{fig:twist}.

\begin{lemma}\label{lem:twist1}
 Let $n$ be a prime natural number, 
$k$ a natural number with 
\newline
GCD$(n,k)=1$ and GCD$(n,k+1)\not =n$
and $c$ a natural number with $c \ge 3$.
 A twist knot with crossing number $c$
is $({\mathbb Z}_n, {}_1*_k)$-colorable
if and only if
either 
$c=2nm+2p$ 
for some non-negative integers $m$ and $p$ 
with $p \equiv \overline{k+1}^2(k^2+k+1)$ (mod $n$) and $ p \le n-1$, 
or $c=2nm+2p+1$ for some non-negative integers $m$ and $p$ 
with $p \equiv \overline{k+1}^2k$ (mod $n$) and $p \le n-1$,
where $\overline{k+1}$ is the element of ${\mathbb Z}_n$
with $\overline{k+1}(k+1)=1$.
 Even when $n$ is not prime, the $\lq\lq$if part" is true.
\end{lemma}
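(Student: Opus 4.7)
The plan is to set up the $({\mathbb Z}_n, {}_1*_k)$-coloring equations on the twist knot diagram of Figure~\ref{fig:twist} directly and derive an explicit arithmetic condition for the existence of a non-trivial coloring.

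Label the arcs of the twist region consecutively as $A_{-1}, A_0, A_1, \ldots, A_{c-2}$, so that the crossing condition $\varphi(t) + k\varphi(u) = (k+1)\varphi(s)$ at each of the $c-2$ crossings in this region gives the linear recurrence
\[
k A_{i+1} + A_{i-1} = (k+1) A_i
\qquad (0 \le i \le c-3).
\]
The characteristic polynomial $kx^2 - (k+1)x + 1$ factors as $(kx-1)(x-1)$, so the general solution in ${\mathbb Z}_n$ has the form $A_i = \alpha + \beta \overline{k}^i$ for some $\alpha, \beta \in {\mathbb Z}_n$. The two remaining crossings in the clasp region impose two further relations between the four boundary arcs $A_{-1}, A_0, A_{c-3}, A_{c-2}$. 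Substituting $A_i = \alpha + \beta \overline{k}^i$ reduces these to a $2\times 2$ linear system in $(\alpha, \beta)$ over ${\mathbb Z}_n$, and a non-trivial coloring exists exactly when this system's determinant vanishes mod $n$.

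The core computation is to show that, up to a unit in ${\mathbb Z}_n$, this determinant equals $M(k+1)^2 - k$ when $c=2M+1$ is odd, and $M(k+1)^2 + k$ when $c=2M+2$ is even. Since $k+1$ is invertible mod $n$ (using ${\rm GCD}(n,k+1) \neq n$ together with the primality of $n$), the vanishing condition simplifies to $M \equiv k\,\overline{k+1}^{\,2} \pmod n$ for odd $c$ and $M \equiv -k\,\overline{k+1}^{\,2} \pmod n$ for even $c$. Writing $c = 2nm + 2p+1$ (odd, so $M = nm+p \equiv p \pmod n$) or $c = 2nm + 2p$ (even, so $M+1 = nm+p$, i.e., $M \equiv p-1 \pmod n$) and using the identity $(k+1)^2 - k = k^2+k+1$, one recovers exactly the congruences stated in the lemma.

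The main technical obstacle is the determinant computation: precisely tracking which arcs at the top and bottom of the twist region are identified with arcs of the clasp, handling the parity dependence of this identification, and simplifying the resulting $2\times 2$ determinant to the clean form $M(k+1)^2 \mp k$. Finally, the ``if'' direction for non-prime $n$ follows from the same construction together with Remark~\ref{rem:colorable}(a): a non-trivial $({\mathbb Z}_n, {}_1*_k)$-coloring exists whenever $\Delta_L(-k) \equiv 0 \pmod n$, regardless of the primality of $n$, so the explicit coloring $(\alpha, \beta)$ produced above is valid without any use of the primality hypothesis.
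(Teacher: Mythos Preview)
Your approach is different from the paper's and considerably longer. The paper does not touch the diagram at all: it simply quotes the Alexander polynomial of the twist knot, namely $\Delta_L(t)=-\tfrac{c-2}{2}t^2+(c-1)t-\tfrac{c-2}{2}$ for $c$ even and $\Delta_L(t)=\tfrac{c-1}{2}t^2-(c-2)t+\tfrac{c-1}{2}$ for $c$ odd, substitutes $t=-k$ and $c=2nm+2p$ (resp.\ $c=2nm+2p+1$), simplifies modulo $n$ to $-(k+1)^2p+(k^2+k+1)$ (resp.\ $(k+1)^2p-k$), and invokes Remark~\ref{rem:colorable}(a). That is the whole proof. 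Your direct coloring argument rediscovers exactly these expressions, which is unsurprising since the coloring matrix is the Alexander matrix at $t=-k$; but since Remark~\ref{rem:colorable}(a) has already done the work of reducing colorability to a single polynomial evaluation, re-solving the recurrence on the diagram duplicates machinery the paper has packaged.

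Two points in your sketch need repair if you carry it out. First, the closed form $A_i=\alpha+\beta\,\overline{k}^{\,i}$ requires the characteristic roots $1$ and $\overline{k}$ to be distinct, so the case $k\equiv 1\pmod n$ (ordinary Fox coloring) must be handled separately with $A_i=\alpha+\beta i$. Second, the ``$2\times 2$ determinant'' cannot be the right object: every clasp relation is satisfied by the constant colorings $(\alpha,0)$, so the first column of your matrix is identically zero and its determinant vanishes for all $c$. What the clasp relations actually yield is a pair of scalar conditions $c_1\beta=0$, $c_2\beta=0$, one of which is redundant (the full system of crossing equations always sums to zero); the surviving coefficient is, up to a unit, your $M(k+1)^2\mp k$, and that is what must vanish modulo $n$. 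With those fixes the argument goes through and matches the paper's conclusion.
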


\begin{proof}
 Let $L$ be the twist knot with the crossing number $c$.
 When $c$ is even,
$c=2nm + 2p$ for some
non-negative integers $m$ and $p$ with $0 \le p \le n-1$. 
% It is known that 
An easy calculation shows 
that the Alexander polynomial of $L$ is given by the formula
$\Delta_L (t) = -((c-2)/2)t^2+(c-1)t-(c-2)/2\ \cdots$ (i). 
 Substituting $2nm+2p$ for $c$,
and $-k$ for $t$  in the polynomial function (i), we have 
\newline
$\Delta_L (-k) =-((2nm+2p-2)/2)(-k)^2+(2nm+2p-1)(-k)-(2nm+2p-2)/2$
\newline
$\ \ \equiv -(p-1)k^2-(2p-1)k-(p-1)
=-(k+1)^2p+(k^2+k+1)$ (mod $n$).
\newline
 Hence $\Delta_L (-k) \equiv 0$ (mod $n$)
if and only if $p \equiv \overline{k+1}^2(k^2+k+1)$ (mod $n$).
 Then the desired conclusion follows 
from Remark \ref{rem:colorable} (a).

 We consider the case where $c$ is odd.
 Then $c=2nm+2p+1$
for some non-negative integers $m$ and $p$ with $0 \le p \le n-1$.
 The Alexander polynomial is given by
$\Delta_L (t) = ((c-1)/2)t^2-(c-2)t+(c-1)/2$.
 Then
\newline
$\Delta_L (-k) = ((2nm+2p+1-1)/2)(-k)^2-(2nm+2p+1-2)(-k)+(2nm+2p+1-1)/2$
\newline
$\ \ \equiv pk^2+(2p-1)k+p
=(k+1)^2p-k$ (mod $n$).
\newline
 Hence $\Delta_L (-k) \equiv 0$ (mod $n$)
if and only if $p \equiv \overline{k+1}^2k$ (mod $n$).
\end{proof}

\begin{lemma}\label{lem:twist2}
 Let $c$ be a natural number with $c \ge 3$.
 A twist knot with crossing number $c$
is colorable with the tetrahedron quandle
if and only if
$c=4m-1$ or $4m$ 
for some natural number $m$.
\end{lemma}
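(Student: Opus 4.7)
The plan is to apply the characterization stated just before the lemma: $L$ is tetrahedron-colorable iff $\Delta_L(t) \equiv 0$ in the ring $R := \mathbb{Z}_2[t,t^{-1}]/(t^2+t+1)$. Since the proof of Lemma~\ref{lem:twist1} already records an explicit formula for the Alexander polynomial of a twist knot (in both parity cases for $c$), this reduces the entire question to a straightforward modular computation in $R$.

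First I would isolate the single algebraic fact about $R$ that is needed: in characteristic $2$, the relation $t^2+t+1 \equiv 0$ gives $t^2 \equiv t+1$, hence $t^2+1 \equiv t$ in $R$. Then I split into cases based on the parity of $c$. For even $c$, the formula from Lemma~\ref{lem:twist1} is $\Delta_L(t) = -((c-2)/2)t^2 + (c-1)t - (c-2)/2$; reducing mod $2$ and collecting the $t^2$ and constant terms as $((c-2)/2)(t^2+1) + t$, the identity $t^2+1 \equiv t$ collapses this to $((c-2)/2 + 1)t = (c/2)t$ in $R$. This vanishes iff $c/2$ is even, i.e.\ $c = 4m$. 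For odd $c$, the formula is $\Delta_L(t) = ((c-1)/2)t^2 - (c-2)t + (c-1)/2$; the same manipulation gives $((c-1)/2)(t^2+1) + t \equiv ((c-1)/2 + 1)t = ((c+1)/2)t$ in $R$, which vanishes iff $(c+1)/2$ is even, i.e.\ $c = 4m-1$.

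Combining the two cases yields exactly the stated condition $c \in \{4m-1, 4m\}$. There is no real obstacle here beyond carefully tracking the mod $2$ reductions of the coefficients $(c-2)/2$, $(c-1)/2$, $c-1$, $c-2$ and not confusing the two parity cases; the only thing to verify at the end is that the range $c \ge 3$ is correctly parametrized (namely, $c=3,4$ correspond to $m=1$, $c=7,8$ to $m=2$, and so on), which is immediate.
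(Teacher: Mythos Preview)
Your proposal is correct and follows essentially the same approach as the paper: both reduce the Alexander polynomial of the twist knot modulo $2$ and test divisibility by $t^2+t+1$, splitting into the even and odd parity cases for $c$. The only cosmetic difference is that the paper parametrizes $c=4m-2p$ (resp.\ $c=4m-2p+1$) with $p\in\{0,1\}$ and checks both values of $p$, whereas you invoke the identity $t^2+1\equiv t$ in $R$ to collapse the expression to a scalar multiple of $t$ directly; the computations are the same.
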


\begin{proof}
 When $c$ is even, 
there is a natural number $m$ and an integer $p$ with $p=0$ or $1$
such that $c=4m-2p$.
 Then
$\Delta_L (t) = -((c-2)/2)t^2 + (c-1)t - (c-2)/2
= -((4m-2p-2)/2)t^2 + (4m-2p-1)t -(4m-2p-2)/2
\equiv (p+1)t^2+t+(p+1)$ in ${\mathbb Z}_2[t,t^{-1}]$,
which is equivalent to $0$
in ${\mathbb Z}_2[t,t^{-1}]/(t^2+t+1)$
if and only if $p=0$.

 When $c$ is odd, 
$c=4m-2p+1$
%for a natural number $m$ and an integer $p$ with $p=0$ or $1$.
for integers $m$, $p$ with $m > 0$ and $p=0$ or $1$.
 Then
$\Delta_L (t) = ((c-1)/2)t^2 - (c-2)t + (c-1)/2
= -((4m-2p+1-1)/2)t^2 + (4m-2p+1-2)t -(4m-2p+1-1)/2
\equiv pt^2+t+p$ in ${\mathbb Z}_2[t,t^{-1}]$,
which is equivalent to $0$
in ${\mathbb Z}_2[t,t^{-1}]/(t^2+t+1)$
if and only if $p=1$.
\end{proof}

\begin{proof}[Proof of Theorem \ref{theorem:twist}]
For a knot $L$, we have 
$q(L)= 3$ if and only if it is quandle colorable by $({\mathbb Z}_3,{}_1*_1)$,
$q(L)=4$ if and only if it is quandle colorable by $S_4$, but not by $({\mathbb Z}_3,{}_1*_1)$,
$q(L)=5$ if and only if it is quandle colorable by $({\mathbb Z}_5,{}_1*_1)$ or $({\mathbb Z}_5,{}_1*_2)$, 
but not by $({\mathbb Z}_3,{}_1*_1)$ nor $S_4$, 
$q(L)=7$ 
if and only if it is quandle colorable by $({\mathbb Z}_7,{}_1*_1)$, $({\mathbb Z}_7,{}_1*_2)$ or $({\mathbb Z}_7,{}_1*_3)$, 
but not by $({\mathbb Z}_3,{}_1*_1)$, $S_4$, $({\mathbb Z}_5,{}_1*_1)$ nor $({\mathbb Z}_5,{}_1*_2)$, 
and $q(L)\geq 8$, otherwise.
%
%\[
%q(L)=\left\{
%\begin{array}{lcl}
%3 &\mbox{iff}& \mbox{it is quandle colorable by $({\mathbb Z}_3,{}_1*_1)$,}\\
%4 &\mbox{iff}& \mbox{it is quandle colorable by $S_4$, but not by $({\mathbb Z}_3,{}_1*_1)$,}\\
%5 &\mbox{iff}& \mbox{it is quandle colorable by $({\mathbb Z}_5,{}_1*_1)$ or $({\mathbb Z}_5,{}_1*_2)$, but not by $({\mathbb Z}_3,{}_1*_1)$ nor $S_4$,}\\ 
%7 &\mbox{iff}& \mbox{it is quandle colorable by $({\mathbb Z}_7,{}_1*_1)$, $({\mathbb Z}_7,{}_1*_2)$ or $({\mathbb Z}_7,{}_1*_3)$, but not } \\
% & &\mbox{by $({\mathbb Z}_3,{}_1*_1)$, $S_4$, $({\mathbb Z}_5,{}_1*_1)$ nor $({\mathbb Z}_5,{}_1*_2)$,}
%\end{array}
%\right.
%\]
%and $q(L)\geq 8$, otherwise. 
 Note that since a knot is 
$({\mathbb Z}_5, {}_1*_3)$-colorable (resp. $({\mathbb Z}_7, {}_1*_4)$-colorable or $({\mathbb Z}_7, {}_1*_5)$-colorable) 
if and only if it is 
$({\mathbb Z}_5, {}_1*_2)$-colorable (resp. $({\mathbb Z}_7, {}_1*_2)$-colorable or $({\mathbb Z}_7, {}_1*_3)$-colorable) 
by Proposition \ref{lem:FundamentalProperty} (2), 
we can omit to consider 
$({\mathbb Z}_5, {}_1*_3)$-colorings, $({\mathbb Z}_7, {}_1*_4)$-colorings and $({\mathbb Z}_7, {}_1*_5)$-colorings. 
 By Lemma \ref{coloringbyQS6}, we can also omit to consider QS6-colorings and QS6'-colorings.
 For twist knots, we consider each case in more detail.
 
% We first consider the case where the crossing number $c$ is even.
 We consider only the case where the crossing number $c$ is even.
 (A similar calculation will do for the case where $c$ is odd, and we omit it.) 
 Then the twist knot with crossing number $c$ is 
$({\mathbb Z}_3,{}_1*_1)$-colorable
if and only if $c=6m_1$ for some positive integer $m_1$,
$({\mathbb Z}_5,{}_1*_1)$-colorable
if and only if $c=10m_2+4$ for some non-negative integer $m_2$,
$({\mathbb Z}_5,{}_1*_2)$-colorable
if and only if $c=10m_3+6$ for some non-negative integer $m_3$,
$({\mathbb Z}_7,{}_1*_1)$-colorable
if and only if $c=14m_4+12$ for some non-negative integer $m_4$,
$({\mathbb Z}_7,{}_1*_2)$-colorable
if and only if $c=14m_5$ for some positive integer $m_5$,
$({\mathbb Z}_7,{}_1*_3)$-colorable
if and only if $c=14m_6+6$ for some non-negative integer $m_6$
by Lemma \ref{lem:twist1}.
 The knot is colorable with the tetrahedron quandle
if and only if $c=4m$ for some positive integer $m$
by Lemma \ref{lem:twist2}.

 When $L$ is $S_4$-colorable
and simultaneously $({\mathbb Z}_3,{}_1*{}_1)$-colorable, 
$4m=c=6m_1$. This implies that $m=3q_1$ for some 
positive integer $q_1$.
Thus $S_4$ is a quandle which gives the minimal quandle order 
if and only if $m=3q+r$ for $r\in \{1,2\}$ and some non-negative integer $q$.
 
 When $L$ is $({\mathbb Z}_5,{}_1*{}_1)$-colorable
and simultaneously $({\mathbb Z}_3,{}_1*{}_1)$-colorable,
$10m_2 + 4 = c = 6m_1$.
 This implies that $5m_2 = 3m_1 -2 \equiv 1$ (mod $3$).
 Hence $m_2 = 3q_1+2$ for some non-negative integer $q_1$.
 When $L$ is $({\mathbb Z}_5,{}_1*{}_1)$-colorable
and simultaneously colorable with the tetrahedron quandle,
$10m_2 + 4 = c = 4m$.
 This implies that $5m_2 = 2m-2 \equiv 0$ (mod $2$).
 Hence $m_2 = 2q_2$ for some non-negative integer $q_2$.
 Thus $({\mathbb Z}_5,{}_1*_1)$ is a quandle which gives the minimal quandle order
if and only if $m_2 = 6q + r$ 
for $r \in \{ 1,3 \}$ and some non-negative integer $q$.

 When $L$ is $({\mathbb Z}_5,{}_1*{}_2)$-colorable
and simultaneously $({\mathbb Z}_3,{}_1*{}_1)$-colorable,
$10m_3 + 6 = c = 6m_1$.
 This implies that $5m_3 = 3m_1 -3 \equiv 0$ (mod $3$).
 Hence $m_3 = 3q_1$ for some non-negative integer $q_1$.
 When $L$ is $({\mathbb Z}_5,{}_1*{}_2)$-colorable
and simultaneously colorable with the tetrahedron quandle,
$10m_3 + 6 = c = 4m$.
 This implies that $5m_3 = 2m-3 \equiv 1$ (mod $2$).
 Hence $m_3 = 2q_2+1$ for some non-negative integer $q_2$.
 Thus $({\mathbb Z}_5,{}_1*_2)$ is a quandle which gives the minimal quandle order
if and only if $m_3 = 6q + r$ 
for $r \in \{ 2,4 \}$ and some non-negative integer $q$.

 When $L$ is $({\mathbb Z}_7,{}_1*{}_1)$-colorable
and simultaneously $({\mathbb Z}_3,{}_1*{}_1)$-colorable,
$14m_4 + 12 = c = 6m_1$.
 This implies that $7m_4 = 3m_1 -6 \equiv 0$ (mod $3$).
 Hence $m_4 = 3q_1$ for some non-negative integer $q_1$.
 When $L$ is $({\mathbb Z}_7,{}_1*{}_1)$-colorable
and simultaneously colorable with the tetrahedron quandle,
$14m_4 + 12 = c = 4m$.
 This implies that $7m_4 = 2m-6 \equiv 0$ (mod $2$).
 Hence $m_4 = 2q_2$ for some non-negative integer $q_2$.
 When $L$ is $({\mathbb Z}_7,{}_1*{}_1)$-colorable
and simultaneously $({\mathbb Z}_5,{}_1*{}_1)$-colorable,
$14m_4 + 12 = c = 10m_2+4$.
 This implies that $7m_4 = 5m_2 -4 \equiv 1$ (mod $5$).
 Hence $m_4 = 5q_3+3$ for some non-negative integer $q_3$.
 When $L$ is $({\mathbb Z}_7,{}_1*{}_1)$-colorable
and simultaneously $({\mathbb Z}_5,{}_1*{}_2)$-colorable,
$14m_4 + 12 = c = 10m_3+6$.
 This implies that $7m_4 = 5m_3 -3 \equiv 2$ (mod $5$).
 Hence $m_4 = 5q_4+1$ for some non-negative integer $q_4$.
 Thus $({\mathbb Z}_7,{}_1*_1)$ is a quandle which gives the minimal quandle order
if and only if $m_4 = 30q + r$ 
for $r \in \{  5,7,17,19,25,29 \}$ and some non-negative integer $q$.

 When $L$ is $({\mathbb Z}_7,{}_1*{}_2)$-colorable
and simultaneously $({\mathbb Z}_3,{}_1*{}_1)$-colorable,
$14m_5 = c = 6m_1$.
 This implies that $7m_5 = 3m_1 \equiv 0$ (mod $3$).
 Hence $m_5 = 3q_1$ for some 
positive integer $q_1$.
 When $L$ is $({\mathbb Z}_7,{}_1*{}_2)$-colorable
and simultaneously colorable with the tetrahedron quandle,
$14m_5 = c = 4m$.
 This implies that $7m_5 = 2m \equiv 0$ (mod $2$).
 Hence $m_5 = 2q_2$ for some 
positive integer $q_2$.
 When $L$ is $({\mathbb Z}_7,{}_1*{}_2)$-colorable
and simultaneously $({\mathbb Z}_5,{}_1*{}_1)$-colorable,
$14m_5 = c = 10m_2+4$.
 This implies that $7m_5 = 5m_2 +2 \equiv 2$ (mod $5$).
 Hence $m_5 = 5q_3+1$ for some non-negative integer $q_3$.
 When $L$ is $({\mathbb Z}_7,{}_1*{}_2)$-colorable
and simultaneously $({\mathbb Z}_5,{}_1*{}_2)$-colorable,
$14m_5 = c = 10m_3+6$.
 This implies that $7m_5 = 5m_3 +3 \equiv 3$ (mod $5$).
 Hence $m_5 = 5q_4+4$ for some non-negative integer $q_4$.
 Thus $({\mathbb Z}_7,{}_1*_2)$ is a quandle which gives the minimal quandle order
if and only if $m_5 = 30q + r$ 
for $r \in \{  5,7,13,17,23,25 \}$ and some non-negative integer $q$.

 When $L$ is $({\mathbb Z}_7,{}_1*{}_3)$-colorable
and simultaneously $({\mathbb Z}_3,{}_1*{}_1)$-colorable,
$14m_6 +6 = c = 6m_1$.
 This implies that $7m_6 = 3m_1 -3 \equiv 0$ (mod $3$).
 Hence $m_6 = 3q_1$ for some non-negative integer $q_1$.
 When $L$ is $({\mathbb Z}_7,{}_1*{}_3)$-colorable
and simultaneously colorable with the tetrahedron quandle,
$14m_6 +6 = c = 4m$.
 This implies that $7m_6 = 2m -3 \equiv 1$ (mod $2$).
 Hence $m_6 = 2q_2+1$ for some non-negative integer $q_2$.
 When $L$ is $({\mathbb Z}_7,{}_1*{}_3)$-colorable
and simultaneously $({\mathbb Z}_5,{}_1*{}_1)$-colorable,
$14m_6 +6 = c = 10m_2+4$.
 This implies that $7m_6 = 5m_2 -1 \equiv 4$ (mod $5$).
 Hence $m_6 = 5q_3+2$ for some non-negative integer $q_3$.
 When $L$ is $({\mathbb Z}_7,{}_1*{}_3)$-colorable
and simultaneously $({\mathbb Z}_5,{}_1*{}_2)$-colorable,
$14m_6 +6 = c = 10m_3+6$.
 This implies that $7m_6 = 5m_3 \equiv 0$ (mod $5$).
 Hence $m_6 = 5q_4$ for some non-negative integer $q_4$.
 Thus $({\mathbb Z}_7,{}_1*_3)$ is a quandle which gives the minimal quandle order
if and only if $m_6 = 30q + r$ 
for $r \in \{  4,8,14,16,26,28 \}$ and some non-negative integer $q$.

 Next we consider the case where $c$ is odd.
Then the twist knot with crossing number $c$ is 
$({\mathbb Z}_3,{}_1*_1)$-colorable
if and only if $c=6m_1+3$ for some non-negative integer $m_1$,
$({\mathbb Z}_5,{}_1*_1)$-colorable
if and only if $c=10m_2+9$ for some non-negative integer $m_2$,
$({\mathbb Z}_5,{}_1*_2)$-colorable
if and only if $c=10m_3+7$ for some non-negative integer $m_3$,
$({\mathbb Z}_7,{}_1*_1)$-colorable
if and only if $c=14m_4+5$ for some non-negative integer $m_4$,
$({\mathbb Z}_7,{}_1*_2)$-colorable
if and only if $c=14m_5+3$ for some positive integer $m_5$,
$({\mathbb Z}_7,{}_1*_3)$-colorable
if and only if $c=14m_6+11$ for some non-negative integer $m_6$
by Lemma \ref{lem:twist1}.
 The knot is colorable with the tetrahedron quandle
if and only if $c=4m-1$ for some positive integter $m$
by Lemma \ref{lem:twist2}.

 When $L$ is $S_4$-colorable
and simultaneously $({\mathbb Z}_3,{}_1*{}_1)$-colorable, 
$4m-1=c=6m_1+3$. This implies that $m=3q_1+1$ for some non-negative integer $q_1$.
Thus $S_4$ is a quandle which gives the minimal quandle order if and only if $m=3q+r$ for $r\in \{2,3\}$ and some non-negative integer $q$.

 When $L$ is $({\mathbb Z}_5,{}_1*{}_1)$-colorable
and simultaneously $({\mathbb Z}_3,{}_1*{}_1)$-colorable,
$10m_2 + 9 = c = 6m_1+3$.
 This implies that $5m_2 = 3m_1 -3 \equiv 0$ (mod $3$).
 Hence $m_2 = 3q_1$ for some non-negative integer $q_1$.
 When $L$ is $({\mathbb Z}_5,{}_1*{}_1)$-colorable
and simultaneously colorable with the tetrahedron quandle,
$10m_2 + 9 = c = 4m-1$.
 This implies that $5m_2 = 2m-5 \equiv 1$ (mod $2$).
 Hence $m_2 = 2q_2+1$ for some non-negative integer $q_2$.
 Thus $({\mathbb Z}_5,{}_1*_1)$ is a quandle which gives the minimal quandle order
if and only if $m_2 = 6q + r$ 
for $r \in \{ 2,4 \}$ and some non-negative integer $q$.

 When $L$ is $({\mathbb Z}_5,{}_1*{}_2)$-colorable
and simultaneously $({\mathbb Z}_3,{}_1*{}_1)$-colorable,
$10m_3 + 7 = c = 6m_1 +3$.
 This implies that $5m_3 = 3m_1 -2 \equiv 1$ (mod $3$).
 Hence $m_3 = 3q_1+2$ for some non-negative integer $q_1$.
 When $L$ is $({\mathbb Z}_5,{}_1*{}_2)$-colorable
and simultaneously colorable with the tetrahedron quandle,
$10m_3 + 7 = c = 4m-1$.
 This implies that $5m_3 = 2m-4 \equiv 0$ (mod $2$).
 Hence $m_3 = 2q_2$ for some non-negative integer $q_2$.
 Thus $({\mathbb Z}_5,{}_1*_2)$ is a quandle which gives the minimal quandle order
if and only if $m_3 = 6q + r$ 
for $r \in \{ 1,3 \}$ and some non-negative integer $q$.

 When $L$ is $({\mathbb Z}_7,{}_1*{}_1)$-colorable
and simultaneously $({\mathbb Z}_3,{}_1*{}_1)$-colorable,
$14m_4 + 5 = c = 6m_1+3$.
 This implies that $7m_4 = 3m_1 -1 \equiv 2$ (mod $3$).
 Hence $m_4 = 3q_1+2$ for some non-negative integer $q_1$.
 When $L$ is $({\mathbb Z}_7,{}_1*{}_1)$-colorable
and simultaneously colorable with the tetrahedron quandle,
$14m_4 + 5 = c = 4m-1$.
 This implies that $7m_4 = 2m-3 \equiv 1$ (mod $2$).
 Hence $m_4 = 2q_2+1$ for some non-negative integer $q_2$.
 When $L$ is $({\mathbb Z}_7,{}_1*{}_1)$-colorable
and simultaneously $({\mathbb Z}_5,{}_1*{}_1)$-colorable,
$14m_4 + 5 = c = 10m_2+9$.
 This implies that $7m_4 = 5m_2 +2 \equiv 2$ (mod $5$).
 Hence $m_4 = 5q_3+1$ for some non-negative integer $q_3$.
 When $L$ is $({\mathbb Z}_7,{}_1*{}_1)$-colorable
and simultaneously $({\mathbb Z}_5,{}_1*{}_2)$-colorable,
$14m_4 + 5 = c = 10m_3+7$.
 This implies that $7m_4 = 5m_3 +1 \equiv 1$ (mod $5$).
 Hence $m_4 = 5q_4+3$ for some non-negative integer $q_4$.
 Thus $({\mathbb Z}_7,{}_1*_1)$ is a quandle which gives the minimal quandle order
if and only if $m_4 = 30q + r$ 
for $r \in \{ 0,4,10,12,22,24 \}$ and some non-negative integer $q$.

 When $L$ is $({\mathbb Z}_7,{}_1*{}_2)$-colorable
and simultaneously $({\mathbb Z}_3,{}_1*{}_1)$-colorable,
$14m_5 + 3 = c = 6m_1+3$.
 This implies that $7m_5 = 3m_1  \equiv 0$ (mod $3$).
 Hence $m_4 = 3q_1$ for some non-negative integer $q_1$.
 When $L$ is $({\mathbb Z}_7,{}_1*{}_2)$-colorable
and simultaneously colorable with the tetrahedron quandle,
$14m_5 + 3 = c = 4m-1$.
 This implies that $7m_5 = 2m-2 \equiv 0$ (mod $2$).
 Hence $m_5 = 2q_2$ for some non-negative integer $q_2$.
 When $L$ is $({\mathbb Z}_7,{}_1*{}_2)$-colorable
and simultaneously $({\mathbb Z}_5,{}_1*{}_1)$-colorable,
$14m_5 + 3 = c = 10m_2+9$.
 This implies that $7m_5 = 5m_2 +3 \equiv 3$ (mod $5$).
 Hence $m_5 = 5q_3+4$ for some non-negative integer $q_3$.
 When $L$ is $({\mathbb Z}_7,{}_1*{}_2)$-colorable
and simultaneously $({\mathbb Z}_5,{}_1*{}_2)$-colorable,
$14m_5 + 3 = c = 10m_3+7$.
 This implies that $7m_5 = 5m_3 +2 \equiv 2$ (mod $5$).
 Hence $m_5 = 5q_4+1$ for some non-negative integer $q_4$.
 Thus $({\mathbb Z}_7,{}_1*_2)$ is a quandle which gives the minimal quandle order
if and only if $m_5 = 30q + r$ 
for $r \in \{ 5,7,13,17,23,25 \}$ and some non-negative integer $q$.

 When $L$ is $({\mathbb Z}_7,{}_1*{}_3)$-colorable
and simultaneously $({\mathbb Z}_3,{}_1*{}_1)$-colorable,
$14m_6 + 11 = c = 6m_1+3$.
 This implies that $7m_6 = 3m_1 -4  \equiv 2$ (mod $3$).
 Hence $m_6 = 3q_1 +2$ for some non-negative integer $q_1$.
 When $L$ is $({\mathbb Z}_7,{}_1*{}_3)$-colorable
and simultaneously colorable with the tetrahedron quandle,
$14m_6 + 11 = c = 4m-1$.
 This implies that $7m_6 = 2m-6 \equiv 0$ (mod $2$).
 Hence $m_6 = 2q_2$ for some non-negative integer $q_2$.
 When $L$ is $({\mathbb Z}_7,{}_1*{}_3)$-colorable
and simultaneously $({\mathbb Z}_5,{}_1*{}_1)$-colorable,
$14m_6 + 11 = c = 10m_2+9$.
 This implies that $7m_6 = 5m_2 -1 \equiv 4$ (mod $5$).
 Hence $m_6 = 5q_3+2$ for some non-negative integer $q_3$.
 When $L$ is $({\mathbb Z}_7,{}_1*{}_3)$-colorable
and simultaneously $({\mathbb Z}_5,{}_1*{}_2)$-colorable,
$14m_6 + 11 = c = 10m_3+7$.
 This implies that $7m_6 = 5m_3 -2 \equiv 3$ (mod $5$).
 Hence $m_6 = 5q_4+4$ for some non-negative integer $q_4$.
 Thus $({\mathbb Z}_7,{}_1*_3)$ is a quandle which gives the minimal quandle order
if and only if $m_6 = 30q + r$ 
for $r \in \{ 1,3,13,15,21,25 \}$ and some non-negative integer $q$.

The above argument yields the conclusion of Theorem \ref{theorem:twist}.
\end{proof}

%----------------------------------------------------------------

\bibliographystyle{amsplain}

\medskip

\noindent
Chuichiro Hayashi: 
Department of Mathematical and Physical Sciences,
Faculty of Science, Japan Women's University,
2-8-1 Mejirodai, Bunkyo-ku, Tokyo, 112-8681, Japan.
hayashic@fc.jwu.ac.jp

\vspace{3mm}
\noindent
Miwa Hayashi:
Department of Mathematical and Physical Sciences,
Faculty of Science, Japan Women's University,
2-8-1 Mejirodai, Bunkyo-ku, Tokyo, 112-8681, Japan.
\newline
miwakura@fc.jwu.ac.jp

\vspace{3mm}
\noindent
Kanako Oshiro:
Department of Mathematical and Physical Sciences,
Faculty of Science, Japan Women's University,
2-8-1 Mejirodai, Bunkyo-ku, Tokyo, 112-8681, Japan.
\newline
ooshirok@fc.jwu.ac.jp

\end{document}